\newtheorem{theorem}{Theorem}[section]
\newtheorem{proposition}[theorem]{Proposition}
\newtheorem{lemma}[theorem]{Lemma}
\newtheorem{conjecture}[theorem]{Conjecture}
\newtheorem*{example*}{Example}
\def\d{\delta}
\def\ca{\mathcal{A}}
\def\cp{\mathcal{P}}
\def\cg{\mathcal{G}}
\def\ch{\mathcal{H}}
\def\ci{\mathcal{I}}
\def\ck{\mathcal{K}}
\def\F{\mathbb{F}}
\def\R{\mathbb{R}}
\def\N{\mathbb{N}}
\DeclareMathOperator{\Sym}{Sym}
\title{Unions of intervals in codes based on powers of sets}
\author{Thomas Karam\footnote{Mathematical Institute, University of Oxford. Email: \texttt{thomas.karam@maths.ox.ac.uk}.}}
\begin{document}
\maketitle
%\section{}
%\subsection{}

%\textbf{Maybe put interval graph codes and question of Alon as the title}

\begin{abstract}

We prove that for every integer $d \ge 2$ there exists a dense collection of subsets of $[n]^d$ such that no two of them have a symmetric difference that may be written as the $d$th power of a union of at most $\lfloor d/2 \rfloor$ intervals. This provides a limitation on reasonable tightenings of a question of Alon from 2023 and of a conjecture of Gowers from 2009, and investigates a direction analogous to that of recent works of Conlon, Kam\v{c}ev, Leader, R\"aty and Spiegel on intervals in the Hales-Jewett theorem.

\end{abstract}

\tableofcontents

\section{Introduction}

We will throughout use the following notations. If $n$ is a positive integer, then $[n]$ will denote the set $\{1, \dots, n\}$ of positive integers between $1$ and $n$. If $a$,$b$ are positive integers with $a \le b$, then $[a,b]$ will denote the set $\{a,a+1, \dots,b\}$ of $b-a+1$ consecutive integers starting at $a$ and ending at $b$, and we will refer to such a set as an \emph{interval}. If $A \subset B$ are two finite sets and $B$ is non-empty, then we will say that the ratio $|A|/|B|$ is the \emph{density} of $A$ inside $B$.

One research theme involves extending to patterns on set systems (and more generally to patterns on high-dimensional families) results on patterns in the integers. For instance, the Hales-Jewett theorem (proved in \cite{Hales-Jewett}) and the density Hales-Jewett theorem (proved by Furstenberg and Katznelson \cite{Furstenberg and Katznelson k=3}, \cite{Furstenberg and Katznelson}, then by the Polymath1 project \cite{Polymath}) are such generalisations of van der Waerden’s theorem and of Szemer\'edi’s theorem respectively. Likewise, the Bergelson-Leibman theorem \cite{Bergelson Leibman} is a corresponding generalisation of the polynomial van der Waerden theorem. One goal in this programme, discussed by Gowers \cite{Gowers}, is to combine these generalisations.

\begin{conjecture} \label{polynomial density Hales-Jewett conjecture} \cite[Conjecture 3]{Gowers} Let $k, d$ be positive integers and let $\d>0$. If $n$ is large enough depending on $k,d,\d$ only then for every subset $\ca$ of the set \[\ck = [k]^{[n]}\times\dots\times[k]^{[n]^{d}}\] with density at least $\d$ there is a non-empty subset $S\subset [n]$ and an element \[y \in [k]^{[n] \setminus S} \cup \dots \cup [k]^{[n]^{d} \setminus S^{d}}\] such that whenever the coordinates of $x \in \ck$ are constant on each of the sets $S, \dots, S^d$, and coincide with those of $y$ outside these sets, we have that $x \in \ca$.
\end{conjecture}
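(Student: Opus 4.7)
The plan is to extend the combinatorial proof of the density Hales--Jewett theorem (due to the Polymath1 project) to handle the polynomial structure of $\ck = [k]^{[n]} \times \cdots \times [k]^{[n]^{d}}$. First I would set up the analogue of a combinatorial line in this polynomial setting: given a non-empty $S \subset [n]$ and fixed values $y$ on the coordinates outside $S \cup \cdots \cup S^{d}$, the configurations $x \in \ck$ that are constant on each $S^{i}$ and agree with $y$ elsewhere form a $k$-element ``polynomial combinatorial line''. The goal is to show that any $\ca \subset \ck$ of density at least $\d$ contains such a line. A density-increment framework is natural: one would show that either $\ca$ behaves quasirandomly with respect to a suitable class of structured ``insensitive'' functions, in which case a counting lemma produces the line directly, or else $\ca$ has density increased by a constant on a sub-object of the same polynomial form.

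The main obstacle is the interaction between the combinatorial cubes $[k]^{[n]^{i}}$ at the different levels $i = 1, \dots, d$, coupled by the requirement that a single $S \subset [n]$ simultaneously induces the sets $S, S^{2}, \dots, S^{d}$. In the standard density Hales--Jewett proof one passes to the equal-slices measure on $[k]^{[n]}$, exploits the invariance under $\Sym([n])$, and then runs a correlate-with-insensitive-set argument. Here the symmetry group of $\ck$ is still $\Sym([n])$, acting diagonally on each factor, but an insensitive-set increment at level $i$ needs to be compatible with analogous increments at every other level, because the same $S$ governs all of them. Controlling these nested compatibilities seems to demand a genuinely $d$-dimensional regularity framework rather than a clean induction on $d$.

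A plausible route is to borrow ideas from Walsh's finitary proof of the polynomial Szemer\'edi theorem, which reduces polynomial patterns to essentially linear ones through a PET-induction-style argument on nilpotent shifts, and to splice this reduction into the hypergraph regularity machinery underlying the Polymath proof. Concretely, one would first attempt the case $d=2$, where only the pair $([n], [n]^{2})$ appears; the key new ingredient here is a density Hales--Jewett statement for pairs of correlated words whose correlation is governed by the ``square of a set'' operation. I expect the hardest single step to be the corresponding counting lemma, which must show that a suitably pseudorandom $\ca$ contains the expected number of polynomial lines despite the fact that the coordinates indexed by $S^{2}$ are a dependent function of those indexed by $S$. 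Even granting such a lemma, any iteration is likely to produce only tower-type bounds, which is one reason the conjecture remains open in the generality stated.
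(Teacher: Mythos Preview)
The statement you are attempting to prove is not a theorem of the paper but an open conjecture: it is labelled as such (Conjecture~1.1, attributed to Gowers), and the paper makes no claim to resolve it. Nowhere in the paper is there a proof of the polynomial density Hales--Jewett conjecture; on the contrary, the paper's main contribution (Theorem~1.3) is a \emph{negative} result, constructing dense families $\ca \subset \cp([n]^d)$ that avoid certain restricted versions of the pattern in Conjecture~1.2, thereby limiting how far one could hope to strengthen these conjectures by imposing interval structure on the wildcard set $S$.

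Your proposal is therefore not comparable to anything in the paper. What you have written is a reasonable high-level research plan for attacking the conjecture --- density increment, equal-slices measure, a polynomial counting lemma, PET-style induction --- and you correctly identify the coupling between the levels $S, S^2, \dots, S^d$ as the main obstruction. But you yourself concede at the end that ``the conjecture remains open in the generality stated'', which is precisely the point: there is no proof here to compare against, because neither you nor the paper has one. If the assignment was to reproduce the paper's proof of this statement, the honest answer is that no such proof exists, and your submission should instead note that the item is a conjecture and move on.
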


The case $k=2$ is an important step towards Conjecture \ref{polynomial density Hales-Jewett conjecture} because it is still open while at the same time avoiding the extra difficulties coming from arithmetic progressions that arise for $k \ge 3$. In turn, for $k=2$, the case where we have $\ca = \ca’ \times [2]^{[n]^{2}} \times\dots\times[2]^{[n]^{d}}$ with $\ca’ \subset [2]^n$ can immediately be seen to reduce to Sperner’s theorem \cite{Sperner}, but the “opposite” specialisation $\ca = [2]^{[n]} \times\dots\times[2]^{[n]^{d-1}} \times \ca’$ with $\ca’ \subset [2]^{[n]^{d}}$ remains a conjecture which can be reformulated as follows.

\begin{conjecture}\label{power difference conjecture}

Let $d$ be a positive integer and let $\d>0$. Then, for $n$ large enough depending on $d,\d$ only, every subset $\ca$ of $\cp([n]^d)$ that has density at least $\d$ contains a pair $(A,B)$ of distinct subsets of $[n]^d$ such that $A$ is contained in $B$ and $B \setminus A = S^d$ for some $S \subset [n]$.

\end{conjecture}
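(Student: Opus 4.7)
Since Conjecture \ref{power difference conjecture} is, as the preceding discussion makes clear, open and equivalent to the $k=2$ top-layer case of Conjecture \ref{polynomial density Hales-Jewett conjecture}, my proposal is a sketch of what I consider the most natural line of attack, together with where I expect the genuine obstacle to lie.

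First, I would recast the question as a counting problem on the cube $\{0,1\}^{[n]^d}$ via the function $f: \cp([n]^d) \to \{0,1\}$ equal to the indicator of $\ca$, so that $\E f = \d$. For a non-empty $S \subset [n]$, call a pair of the form $(A, A \cup S^d)$ with $A \cap S^d = \emptyset$ an \emph{$S$-pair}; such a pair lies in $\ca \times \ca$ exactly when $f(A) f(A \cup S^d) = 1$. For a probability distribution $\mu$ on non-empty subsets of $[n]$ to be chosen carefully, the goal reduces to showing
\[
\E_{S \sim \mu} \, \E_{A \subset [n]^d \setminus S^d} f(A) \, f(A \cup S^d) > 0,
\]
where the inner expectation is over a uniformly random subset of $[n]^d \setminus S^d$. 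The main term, obtained by pretending $f$ is quasi-random of density $\d$, should be of order $\d^2$, and the task is to absorb the error coming from the structured part of $\ca$.

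Second, I would attempt a density-increment argument in the spirit of the Polymath proof of density Hales--Jewett. If the above average falls noticeably below $\d^2$, a suitable application of Cauchy--Schwarz should locate a subcube of $\cp([n]^d)$ (obtained by fixing coordinates outside some combinatorial structure) on which $\ca$ has density at least $\d + \eta(\d)$; iterating would then contradict $\d \le 1$. In the case $d = 1$ this scheme essentially collapses to Sperner's theorem applied to chains; for $d \ge 2$ a successful implementation would deliver the $k = 2$ top-layer case of Conjecture \ref{polynomial density Hales-Jewett conjecture}.

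The main obstacle, in my view, is the rigidity that the \emph{same} set $S$ must appear in all $d$ factors of $[n]^d$. This is precisely what separates Conjecture \ref{power difference conjecture} from density Hales--Jewett on $\{0,1\}^{[n]^d}$, in which a wildcard set can be chosen independently in each coordinate; the coupling $S \mapsto S^d$ is exactly what encodes the polynomial content of the problem. Overcoming it seems to demand either a higher-order Fourier analysis on $\{0,1\}^{[n]^d}$ sensitive to such product-structured patterns, or an ergodic-theoretic argument in the spirit of Bergelson--Leibman adapted to the $k=2$ set-system setting. Consistently with the paper's negative result, any successful argument must exploit the full freedom of choosing $S$ across $\cp([n])$ and cannot be reduced to a pigeonhole over a structured family such as unions of boundedly many intervals.
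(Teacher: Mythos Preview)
The statement is labelled as a \emph{conjecture} in the paper and is not proved there; the paper explicitly says that even the weaker symmetric-difference version ``is to our knowledge still open''. So there is no paper proof to compare your proposal against, and you are right to treat it as open.

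What you have written is not a proof but an honest outline of a plausible line of attack together with the point at which it stalls, and you flag this yourself. That is a reasonable response given the status of the statement. Two remarks on the content. First, your density-increment sketch is vague at the crucial point: you assert that a shortfall in the bilinear average would, via Cauchy--Schwarz, produce a density increment on some ``subcube'', but the combinatorial structure on which such an increment could live is exactly what is unclear here---for ordinary density Hales--Jewett one passes to combinatorial subspaces, whereas the product constraint $S\mapsto S^d$ does not obviously interact well with any such restriction. This is not a minor technicality but the heart of the difficulty, and your paragraph does not get past it. Second, your closing sentence, that any successful argument ``cannot be reduced to a pigeonhole over a structured family such as unions of boundedly many intervals'', slightly overstates what the paper shows: Theorem~\ref{No powers of unions of intervals as symmetric differences} rules out forcing $S$ to be a union of at most $\lfloor d/2\rfloor$ intervals, but Proposition~\ref{equivalence between three statements} shows that \emph{if} the conjecture holds then one can a posteriori bound the number of intervals (indeed $|S|$) by a function of $\delta$. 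So bounded structure on $S$ is not excluded---only structure bounded independently of $\delta$.
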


Meanwhile, a second direction of research has been to ask for strengthening of results on patterns in set systems by requiring intervals in these patterns. In the case of the Hales-Jewett theorem, substantial progress has been made, particularly for the alphabet $[3]$, which is now very well understood. Say that a \emph{combinatorial line} in $[3]^n$ is a subset $L \subset [3]^n$ such that for some strict subset $S$ of $[n]$ and some element $y \in [3]^{[n] \setminus S}$, an element $x \in [3]^n$ belongs to $L$ if and only if all coordinates of $x$ that are in $[n] \setminus S$ coincide with those of $y$ and all coordinates of $x$ that are in $S$ are the same. For every positive integer $r$, let $s(r)$ be the smallest integer such every colouring of $[3]^n$ with $r$ colours contains a monochromatic combinatorial line for which the wildcard set - that is, the set $S$ - may be written as a union of at most $s$ intervals. In a sequence of works by Conlon and Kam\v{c}ev \cite{Conlon and Kamcev}, Leader and R\"aty \cite{Leader and Raty}, and Kam\v{c}ev and Spiegel \cite{Kamcev and Spiegel}, the value of $s(r)$ has been determined for every $r$: we have $s(r) = r$ if $r$ is odd and $s(r) = r-1$ if $r$ is even.

%It was first established by Conlon and Kam\v{c}ev \cite{Conlon and Kamcev} that for every odd integer $r$ we can construct $r$-colourings of $[3]^n$ for which every monochromatic combinatorial line has a wildcard set - that is, its set of variable coordinates - that cannot be written as a union of any less than $r$ intervals. In the converse direction, Kam\v{c}ev and Spiegel \cite{Kamcev and Spiegel} established that for every even integer $r$, every $r$-colouring of $[3]^n$ contains a monochromatic combinatorial line with a wildcard set that may be written as the union of at most $r-1$ intervals, generalising the case $r=2$ that had been proved by Leader and R\"aty \cite{Leader and Raty} in the meantime.

Finally, a third avenue, described by Alon \cite{Alon} last year and which since then has received much interest and led to much follow-up work, consists in exploring questions of the following type. Given a set $\ch$ of graphs with vertex set $[n]$, we may ask for the largest size of a “graph-code” with respect to $\ch$, that is, for the largest size of a set $\ca$ of graphs with vertex set $[n]$ such that no two distinct graphs in $\ca$ have edge sets of which the \emph{symmetric difference} is a graph in $\ch$. As discussed in \cite{Alon}, one of the motivations for considering this class of problems is the case where $\ch$ is the set of all cliques with vertex set contained in $[n]$, which provides a symmetric difference version of a graph variant of the case $d=2$ of Conjecture \ref{power difference conjecture}: that graph variant had previously been suggested (\cite{Gowers}, Conjecture 4) as a possible Polymath project.

The weaker, symmetric difference version of Conjecture \ref{power difference conjecture} (that is, the statement that we obtain if we replace the two conditions $A \subset B$ and $B \setminus A = S^d$ by the single condition $A \Delta B = S^d$) is to our knowledge still open. Our main result will assert that even in this version (and therefore in Conjecture \ref{power difference conjecture} as well), the set $S^d$ cannot be required to be a union of too few intervals.

\begin{theorem}\label{No powers of unions of intervals as symmetric differences}

For all positive integers $n \ge d \ge 2$ there exists $\ca \subset \cp([n]^d)$ with $|\ca| = |\cp([n]^d)|/2$ such that there exists no pair $(A,B) \in \ca \times \ca$ satisfying $A \Delta B = S^d$ for some non-empty interval $S \subset [n]$ that can be written as a union of at most $\lfloor d/2 \rfloor$ intervals.

\end{theorem}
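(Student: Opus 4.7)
The plan is to produce $\ca$ as a parity coset: construct $W \subseteq [n]^d$ with the property that $|W \cap S^d|$ is odd for every non-empty $S \subseteq [n]$ expressible as a union of at most $k := \lfloor d/2 \rfloor$ intervals, and then set $\ca := \{A \subseteq [n]^d : |W \cap A| \text{ is even}\}$. Non-emptiness of $W$ gives $|\ca| = |\cp([n]^d)|/2$, and since $|W \cap (A \Delta B)| \equiv |W \cap A| + |W \cap B| \pmod{2}$, no pair $A, B \in \ca$ can satisfy $A \Delta B = S^d$ for such $S$.

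To construct $W$, I first apply a M\"obius-type reduction. Writing $S = I_1 \sqcup \dots \sqcup I_r$ in its minimal form with $r \leq k$ and $I_j = [a_j, b_j]$, $S^d$ decomposes into $r^d$ disjoint rectangles $I_{j_1} \times \dots \times I_{j_d}$. Introduce the cumulative parity function $F \colon \{0, 1, \dots, n\}^d \to \F_2$, $F(x) := |\{y \in W : y_i \leq x_i \text{ for all } i\}| \bmod 2$, which vanishes on the coordinate axes and determines $W$ uniquely via $\mathbf{1}_W(y) = \sum_{\e \in \{0,1\}^d} F(y - \e) \bmod 2$. Applying inclusion-exclusion on each rectangle yields $|W \cap S^d| \equiv \sum_{v \in U^d} F(v) \pmod{2}$ with $U := \{a_j - 1, b_j : 1 \leq j \leq r\}$. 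Minimality of the decomposition forces $a_{j+1} \geq b_j + 2$, making the endpoint sets $\{a_j - 1\}_j$ and $\{b_j\}_j$ disjoint, so $|U| = 2r$; and since $F$ vanishes on the axes, the sum over $U^d$ equals that over $V^d$ with $V := U \cap [n]$, where $|V| \in \{2r - 1, 2r\} \subseteq \{1, 2, \dots, 2k\}$. It therefore suffices to produce an $F$ on $[n]^d$ satisfying $\sum_{v \in V^d} F(v) \equiv 1 \pmod{2}$ for every $V \subseteq [n]$ with $1 \leq |V| \leq 2k$.

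For this I take the following explicit choice. For each non-empty $T \subseteq [n]$ with $|T| \leq d$ and enumeration $T = \{t_1 < \dots < t_{|T|}\}$, define the canonical tuple $v_T := (\underbrace{t_1, \dots, t_1}_{d - |T| + 1}, t_2, t_3, \dots, t_{|T|}) \in [n]^d$, set $F(v_T) = 1$, and set $F = 0$ everywhere else. Since $v_T$ has support exactly $T$, one has $v_T \in V^d$ iff $T \subseteq V$, and hence
\[ \sum_{v \in V^d} F(v) = \#\{T \subseteq V : 1 \leq |T| \leq d\} = \sum_{j=1}^{|V|} \binom{|V|}{j} = 2^{|V|} - 1 \equiv 1 \pmod{2}, \]
with no binomial being suppressed because $|V| \leq 2k \leq d$. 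The resulting $W$ contains $(1, \dots, 1)$ and is therefore non-empty.

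The main difficulty lies in the M\"obius/inclusion-exclusion step, specifically verifying that each $v \in (T_0 \cup T_1)^d$ (with $T_0 := \{b_j\}_j$, $T_1 := \{a_j - 1\}_j$) is counted exactly once across the double summation over $(j_1, \dots, j_d) \in [r]^d$ and over subsets of coordinates; this rests on the gap-induced identity $T_0 \cap T_1 = \emptyset$. Once the reduction is in place, the role of the threshold $\lfloor d/2 \rfloor$ becomes transparent as the precise bound making $|V| \leq 2k \leq d$, so that the tail sum $\sum_{j \geq 1} \binom{|V|}{j} = 2^{|V|} - 1$ is complete.
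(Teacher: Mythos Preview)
Your proof is correct and takes a genuinely different route from the paper's. The paper first reduces to even $d$, then shows that the family $\{\mathbbm{1}_{S^d}: S \in \ci_k\}$ is \emph{linearly independent} over $\F_2$: these indicators lie in the space $\langle \mathbbm{1}_{V(T)} : 1 \le |T| \le d \rangle$, a combinatorial identity shows $|\ci_k| = \binom{n}{1}+\dots+\binom{n}{d}$ when $d=2k$, and an inductive endpoint-perturbation argument shows the $\mathbbm{1}_{S^d}$ span all the $\mathbbm{1}_{V(T)}$. Linear independence then forces every cycle in the symmetric-difference graph to have even length, giving bipartiteness.

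You instead exhibit an explicit $2$-colouring $A \mapsto |W \cap A| \bmod 2$, which is exactly what bipartiteness asks for and no more: it rules out \emph{odd} dependencies among the $\mathbbm{1}_{S^d}$, not all dependencies. Your cumulative-parity reduction is the key device; the minimal-decomposition gap $a_{j+1}\ge b_j+2$ is precisely what makes the $2r$ endpoint values pairwise distinct, so that the $(2r)^d$ inclusion--exclusion terms biject with $U^d$ rather than collide. After that, your choice of $F$ turns the required oddness into the one-line fact that a set of size at most $d$ has $2^{|V|}-1$ non-empty subsets.

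What your argument buys: no reduction to even $d$, no dimension count, no inductive spanning lemma, and a fully explicit $W$ (hence $\ca$). What the paper's argument buys: the stronger structural statement that $\{\mathbbm{1}_{S^d}: S \in \ci_k\}$ is in fact a \emph{basis} of the symmetric subspace, which is of independent interest and which casts the threshold $k=\lfloor d/2\rfloor$ as an exact dimension match rather than merely the inequality $2k\le d$ that your subset count needs.
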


In Section \ref{Section: The case of one interval} we shall begin with the case of one interval, which roughly corresponds to the case $d=2$ of Theorem \ref{No powers of unions of intervals as symmetric differences}, and then in Section \ref{Section: Extending the proof to several intervals} we shall provide a full proof of Theorem \ref{No powers of unions of intervals as symmetric differences}.

As a last comment on the existing literature, we recall that the related theme of pairs of subsets of $[n]$ (rather than $[n]^d$) of which some combination avoids a specified class of sets has been studied in a systematic way in several works: this is for instance respectively the case in \cite{Leader and Long} by Leader and Long for forbidden set differences, in \cite{Karpas and Long} by Karpas and Long for forbidden symmetric differences, and in \cite{Keevash and Long} by Keevash and Long for forbidden intersections.

\section*{Acknowledgement}

The author thanks Timothy Gowers for having introduced him to Conjecture \ref{polynomial density Hales-Jewett conjecture}.

\section{The case of one interval} \label{Section: The case of one interval}

In the present section we prove the following simpler version of Theorem \ref{No powers of unions of intervals as symmetric differences}, where we only require that symmetric differences avoid powers of intervals.

\begin{theorem}\label{No square interval symmetric differences}

For all positive integers $n,d \ge 2$ there exists $\ca \subset \cp([n]^d)$ with $|\ca| = |\cp([n]^d)|/2$ such that there exists no pair $(A,B) \in \ca \times \ca$ satisfying $A \Delta B = I^d$ for some non-empty interval $I \subset [n]$.

\end{theorem}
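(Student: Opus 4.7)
The plan is to build $\ca$ as the kernel of an $\F_2$-linear functional on $\cp([n]^d)$, specifically $\ca = \{A \subset [n]^d : |A \cap T| \equiv 0 \pmod 2\}$ for a well-chosen subset $T \subset [n]^d$. For any $A, B \in \ca$ one has $|(A \Delta B) \cap T| \equiv |A \cap T| + |B \cap T| \equiv 0 \pmod 2$, so it suffices to construct $T$ with $|T \cap I^d|$ odd for every non-empty interval $I \subset [n]$: this forces $A \Delta B \neq I^d$ for every such $I$. Moreover, as soon as $T$ is non-empty the functional $A \mapsto |A \cap T| \pmod 2$ is non-zero (it takes value $1$ on any singleton $\{x\}$ with $x \in T$), so its kernel has index $2$, giving $|\ca| = |\cp([n]^d)|/2$.

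To construct such a $T$, I would combine the main diagonal of $[n]^d$ with a single \emph{shifted diagonal} in which the first coordinate is advanced by one. Explicitly, set
\[ T = \{(t, t, \dots, t) : t \in [n]\} \cup \{(t+1, t, \dots, t) : t \in [n-1]\}. \]
For any non-empty interval $I = [a,b] \subset [n]$, the diagonal contributes exactly $|I|$ points to $T \cap I^d$, namely the $(t, \dots, t)$ with $t \in [a,b]$, while the shifted diagonal contributes $|I| - 1$ points, namely the $(t+1, t, \dots, t)$ with both $t$ and $t+1$ in $[a,b]$, i.e. $t \in [a, b-1]$. These two families are disjoint: points of the first have all coordinates equal, whereas in the second the first coordinate exceeds each of the others by $1$. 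Hence
\[ |T \cap I^d| = |I| + (|I| - 1) = 2|I| - 1, \]
which is always odd, as required.

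The entire argument therefore reduces to a parity verification on one explicit set. The only moment of choice is spotting the right $T$: the main diagonal alone gives count $|I| \pmod 2$, whose parity depends on $|I|$, so some correction is needed, and shifting along one coordinate is the minimal repair that turns the count into $2|I|-1$ for every $I$ simultaneously. Beyond this observation there is no substantial obstacle, and the plan generalises cleanly to the full Theorem \ref{No powers of unions of intervals as symmetric differences} by choosing $T$ so that $|T \cap (I_1 \cup \dots \cup I_{\lfloor d/2 \rfloor})^d|$ is odd for every such union; that extension is the subject of Section \ref{Section: Extending the proof to several intervals}.
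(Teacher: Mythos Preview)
Your proof is correct and takes a genuinely different route from the paper. The paper reduces to $d=2$, defines the graph $G$ on $\cp([n]^2)$ with edges $A\Delta B=I^2$, and shows $G$ is bipartite by proving the stronger fact that the family $\{\mathbbm{1}_{[a,b]^2}:1\le a\le b\le n\}$ is linearly independent over $\F_2$ (indeed a basis of the symmetric functions); one part of the bipartition is then taken as $\ca$. You instead exhibit the bipartition explicitly: your $\ca=\ker\phi$ for the $\F_2$-linear functional $\phi(A)=|A\cap T|\bmod 2$, and the single computation $|T\cap I^d|=2|I|-1$ shows $\phi(\mathbbm{1}_{I^d})=1$ for every interval $I$. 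This is shorter and more constructive, works uniformly in $d\ge 2$ without the reduction step, and proves exactly what the theorem needs rather than the stronger linear-independence statement. What the paper's approach buys is precisely that stronger statement, which is what drives the extension in Section~\ref{Section: Extending the proof to several intervals}: there one needs the full basis property (Proposition~\ref{linear independence in the general case}) to handle unions of up to $\lfloor d/2\rfloor$ intervals, and it is not clear that your explicit $T$ (or an easy modification of it) does the job in that setting, so your closing sentence overstates how directly the method transfers.
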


We have chosen to do so of course for the reader’s convenience but also because later on the full proof of Theorem \ref{No powers of unions of intervals as symmetric differences} will have the same general structure, and we will then focus on the additional complications relatively to the case of Theorem \ref{No square interval symmetric differences}.

With respect to the graph-codes setting of Alon, essentially the same proof also provides a similar statement for clique symmetric differences.

\begin{theorem}\label{graph case}

Let $n \ge 2$ be an integer. There exists a subset $\cg$ of the set of (nonoriented, loopless) graphs on the vertex set $[n]$ that has density at least $1/2$ in that set and such that no two distinct graphs $G_1, G_2 \in \cg$ have edge sets the symmetric difference of which is a clique with vertex set indexed by an interval.

\end{theorem}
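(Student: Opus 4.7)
Plan for the proof of Theorem \ref{graph case}.

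The plan is to use an $\F_2$-parity argument. I would seek a fixed test set $T \subset \binom{[n]}{2}$ such that $|E(K_S) \cap T|$ is odd for every interval $S \subset [n]$ with $|S| \ge 2$, and then take $\cg$ to be the family of graphs $G$ on $[n]$ for which $|E(G) \cap T|$ is even. Since $G \mapsto |E(G) \cap T| \pmod 2$ is a nontrivial $\F_2$-linear functional on $\F_2^{\binom{[n]}{2}}$ (as soon as $T$ is non-empty), the family $\cg$ has density exactly $1/2$. Moreover, if two distinct graphs $G_1, G_2 \in \cg$ satisfied $E(G_1) \Delta E(G_2) = E(K_S)$ for an interval $S$ with $|S| \ge 2$, then by linearity $0 \equiv |E(K_S) \cap T| \pmod 2$, contradicting the defining property of $T$. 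The whole theorem therefore reduces to exhibiting such a set $T$.

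For this I would propose
\[ T = \big\{\{i, j\} \subset [n] : 1 \le |j - i| \le 2\big\}, \]
namely the set of pairs at distance $1$ or $2$ in $[n]$. Given an interval $S = [a, b]$ with $a < b$, the elements of $E(K_S) \cap T$ split into the $b - a$ pairs $\{i, i+1\}$ with $a \le i \le b - 1$, together with the $\max(b - a - 1, 0)$ pairs $\{i, i+2\}$ with $a \le i \le b - 2$. Hence $|E(K_S) \cap T|$ equals $1$ when $b - a = 1$ and $2(b - a) - 1$ when $b - a \ge 2$, and so it is odd in every case.

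The only real obstacle is guessing the correct form of $T$; after that the rest is purely formal. One could alternatively derive $T$ by observing that the $\F_2$-linear system forcing $|E(K_S) \cap T|$ to be odd on every interval $S$ with $|S| \ge 2$ is triangular in a suitable sense and uniquely solvable, and that its unique solution is supported exactly on the pairs at distance at most two. This is presumably the graph-theoretic analogue of the test set used in the upcoming proof of Theorem \ref{No square interval symmetric differences}, with ordered pairs on $[n]^2$ replaced by unordered pairs on $\binom{[n]}{2}$ (and the diagonal dropped); the proof of Theorem \ref{graph case} should follow by the same template.
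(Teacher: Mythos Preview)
Your argument is correct. The set $T=\{\{i,j\}:1\le|i-j|\le 2\}$ does the job: the count $|E(K_{[a,b]})\cap T|$ is $1$ when $b-a=1$ and $2(b-a)-1$ when $b-a\ge 2$, hence always odd, and the level set $\{G:|E(G)\cap T|\equiv 0\}$ of a nontrivial $\F_2$-functional has density exactly $1/2$. Since distinct $G_1,G_2$ force $E(G_1)\Delta E(G_2)\neq\emptyset$, only intervals $S$ with $|S|\ge 2$ are relevant, so your restriction is the right one.

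However, this is \emph{not} the route the paper takes, and your closing speculation about a ``test set used in the upcoming proof of Theorem~\ref{No square interval symmetric differences}'' is off. The paper does not construct any separating functional. Instead it proves the stronger fact that the indicators $\mathbbm{1}_{[a,b]^2}$ (for $1\le a\le b\le n$) are linearly independent over $\F_2$, by showing via the identity
\[
\mathbbm{1}_{[a,b]^2}-\mathbbm{1}_{[a,b-1]^2}-\mathbbm{1}_{[a+1,b]^2}+\mathbbm{1}_{[a+1,b-1]^2}=\mathbbm{1}_{\{(a,b),(b,a)\}}
\]
that they span the $\binom{n+1}{2}$-dimensional space of symmetric functions on $[n]^2$, and then matching cardinalities. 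Bipartiteness of the symmetric-difference graph follows, and one takes the larger side. Theorem~\ref{graph case} is obtained by rerunning this with the diagonal and the singleton intervals deleted.

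Your approach is more elementary and more explicit: it produces the bipartition directly rather than inferring its existence from linear independence. What you prove is logically weaker (only that no \emph{odd} subfamily of the clique indicators sums to zero), but that is precisely what the theorem needs. The paper's approach, by contrast, yields the full basis statement, which is what generalises in Section~\ref{Section: Extending the proof to several intervals} to unions of $\lfloor d/2\rfloor$ intervals; an explicit test set for that general case would be considerably harder to guess.
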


We now perform the successive steps that will lead us to Theorem \ref{No square interval symmetric differences}.

\begin{proof}[Proof of Theorem \ref{No square interval symmetric differences}]

We first note that it suffices to obtain $\ca$ satisfying the inequality $|\ca| \ge |\cp([n]^d)|/2$, since if this inequality is strict, then by averaging, $\ca$ must contain a pair $A$,$B$ of elements that only differ by some diagonal element, that is, some element $(a,\dots,a) \in [n]^d$ with $a \in [n]$.

Next, we note that we may reduce to the case $d=2$. To do this, we define a map $i_d: [n]^2 \to [n]^d$ by \[i_d(x,y) = i_d(x,\dots,x,y)\] for all $x,y \in [n]$. If $\ca \subset \cp([n]^2)$ then we define $\ca’ \subset \cp([n]^d)$ to be the collection of subsets of $[n]^d$ that have a restriction to \[i_d([n]^2) = \{(x,\dots,x,y): (x,y) \in [n]^2\}\] which coincides with $i_d(A)$ for some $A \in \ca$. The densities $|\ca’|/|\cp([n]^d)|$ and $|\ca|/|\cp([n]^2)|$ are the same, and if $A’,B’ \in \ca’$ satisfy $A’ \Delta B’ = I^d$ for some non-empty interval $I$ of $[n]$, then the sets $A = i_d^{-1}(A’ \cap i_d([n]^2))$ and $B = i_d^{-1}(B’ \cap i_d([n]^2))$ satisfy $A \Delta B = I^2$.

After that, we define a (nonoriented, loopless) graph $G$ with vertex set $\cp([n]^2)$ and where two distinct vertices $A$,$B$ are joined by an edge if and only if there exists an interval $I \subset [n]$ such that $A \Delta B = I^2$. In the remainder of the proof we will show that the graph $G$ is bipartite; to conclude the proof we may then take $\ca$ to be one larger of the two parts of the bipartition. (In fact, by the first observation from the present proof, they cannot have different sizes.)

In turn, to show that $G$ is bipartite it suffices to show that $G$ contains no cycle of odd length, and to establish this it suffices to prove that the family of functions \[\{\mathbbm{1}_{[a,b]^2}: 1 \le a \le b \le n\}\] is linearly independent over $\F_2$. Indeed, suppose that $G$ contains a cycle $A_0, \dots, A_k$ with $A_k=A_0$ and such that for every $i \in [k]$ we have $\{A_{i-1}, A_{i}\} \in E(G)$, that is, $A_{i} \Delta A_{i-1} = I_i^2$ for some interval $I_i \subset [n]$. Then we have\[I_1^2 \Delta \dots \Delta I_k^2 = \emptyset,\] that is, the linear relation \[\mathbbm{1}_{I_1^2} + \dots + \mathbbm{1}_{I_k^2} = 0\] over $\F_2$. The intervals $I_1, \dots, I_k$ might not yet be pairwise distinct; we consider a maximal subfamily of pairwise distinct intervals among $I_1, \dots, I_k$, which without loss of generality we can assume to be $I_1, \dots, I_l$ for some $l \le k$; letting (for each $h \in [l]$) $A_h \in \N$ be the number of appearances of $I_h$ among $I_1, \dots, I_k$ we obtain \[A_1 \mathbbm{1}_{I_1^2} + \dots + A_l \mathbbm{1}_{I_l^2} = 0\] over $\F_2$. Assuming that $\mathbbm{1}_{I_1^2}, \dots, \mathbbm{1}_{I_l^2}$ are linearly independent over $\F_2$, each of the integers $A_1, \dots, A_l$ must be even and their sum $k$ must hence be even, so the cycle $A_0, \dots, A_k$ must have even length.

Finally we establish the desired linear independence. We begin by noting that whenever $1 \le a \le b \le n$ we may write the set differences \[ [a,b]^2 \setminus [a,b-1]^2 \text{ and } [a+1,b]^2 \setminus [a+1,b-1]^2\] respectively as \begin{align*} \{(a,b), (a+1,b),\dots,(b-1,b), & (b,b), (b,b-1),\dots,(b,a+1), (b,a)\} \\ \{(a+1,b), \dots,(b-1,b), & (b,b), (b,b-1),\dots, (b,a+1)\}.\end{align*} This provides the identity \[ \mathbbm{1}_{[a,b]^2} - \mathbbm{1}_{[a,b-1]^2} - \mathbbm{1}_{[a+1,b]^2} + \mathbbm{1}_{[a+1,b-1]^2} = \mathbbm{1}_{\{(a,b), (b,a)\}},\] which holds over $\R$ and hence also over $\F_2$. Let $\Sym([n]^2)$ be the linear space of functions $u:[n]^2 \to \F_2$ satisfying $u(x,y) = u(y,x)$ for all $(x,y) \in [n]^2$. The family \[ \{\mathbbm{1}_{\{(a,a)\}}: a \in [n] \} \cup \{\mathbbm{1}_{\{(a,b), (b,a)\}}: 1 \le a < b \le n \} \] constitutes a basis of $\Sym([n]^2)$ and has size $n(n+1)/2$, and as we have now shown that it is spanned by the family \[\{\mathbbm{1}_{[a,b]^2}: 1 \le a \le b \le n\}\] which has exactly the same size, that family is also a basis of $\Sym([n]^2)$, and is in particular linearly independent over $\F_2$. \end{proof}

To prove Theorem \ref{graph case} it suffices to go through the proof of Theorem \ref{No square interval symmetric differences} while making three modifications: ignoring the first two paragraphs, ignoring the $n$ diagonal elements of $[n]^2$, and ignoring the $n$ intervals in $[n]$ with size $1$.

It is worthwhile to note how the proof of Theorem \ref{No square interval symmetric differences} fails in the case $d=1$, where \[\mathbbm{1}_{I_1 \cup I_2} - \mathbbm{1}_{I_1} - \mathbbm{1}_{I_2} = 0\] for any disjoint intervals $I_1, I_2 \subset [n]$, and how it fails to provide an analogous negative result to the case $d=2$ of Conjecture \ref{power difference conjecture}, where for any pairwise disjoint intervals $I_1, I_2, I_3 \subset [n]$ we have \[\mathbbm{1}_{(I_1 \cup I_2 \cup I_3)^2} - \mathbbm{1}_{(I_1 \cup I_2)^2} - \mathbbm{1}_{(I_1 \cup I_3)^2} - \mathbbm{1}_{(I_2 \cup I_3)^2} + \mathbbm{1}_{I_1^2} + \mathbbm{1}_{I_2^2} + \mathbbm{1}_{I_3^2} = 0.\]

This identity provides a cycle with odd length not only in the graph $G$ used in the proof of Theorem \ref{No square interval symmetric differences}, but also a cycle with odd length in the analogous graph for the case $d=2$ of Conjecture \ref{power difference conjecture}, that is, the graph with vertex set $\{0,1\}^{[n]^2}$ where we join two distinct sets $A,B$ by an edge if $A \subset B$ and $B \setminus A = S^2$ for some $S \subset [n]$. Indeed if for instance, $I_1, I_2, I_3$ are taken to be $\{1\}, \{2\}, \{3\}$ respectively, then the sequence of sets with respective indicator functions represented by the following matrices provides a cycle with length $7$. \[\begin{bmatrix} 0 & 0 & 0 \\
0 & 0 & 0 \\
0 & 0 & 0
\end{bmatrix} \begin{bmatrix} 1 & 1 & 0 \\
1 & 1 & 0 \\
0 & 0 & 0
\end{bmatrix} \begin{bmatrix} 1 & 1 & 0 \\
1 & 0 & 0 \\
0 & 0 & 0
\end{bmatrix} \begin{bmatrix} 1 & 1 & 0 \\
1 & 1 & 1 \\
0 & 1 & 1
\end{bmatrix} \begin{bmatrix} 0 & 1 & 0 \\
1 & 1 & 1 \\
0 & 1 & 1
\end{bmatrix} \begin{bmatrix} 0 & 1 & 0 \\
1 & 1 & 1 \\
0 & 1 & 0
\end{bmatrix} \begin{bmatrix} 1 & 1 & 1 \\
1 & 1 & 1 \\
1 & 1 & 1
\end{bmatrix}\]

\section{Extending the proof to unions of several intervals} \label{Section: Extending the proof to several intervals}

In the present section we prove Theorem \ref{No powers of unions of intervals as symmetric differences} in full. Several facts from the proof of Theorem \ref{No square interval symmetric differences} in Section \ref{Section: The case of one interval} extend to unions of $\lfloor d/2 \rfloor$ intervals. 

First, we had shown at the beginning of the proof of Theorem \ref{No square interval symmetric differences} that a subset $\ca \subset \cp([n]^d)$ satisfying $|\ca| > |\cp([n]^d)|/2$ must necessarily contain a pair $(A,B)$ of elements such that $A \Delta B = \{a\}^d$ for some $a \in [n]$. The singleton $\{a\}^d$ is in particular a $d$th power of a union of at most $\lfloor d/2 \rfloor$ intervals, so to prove Theorem \ref{No powers of unions of intervals as symmetric differences} we may relax the requirement $|\ca| = |\cp([n]^d)|/2$ to $|\ca| \ge |\cp([n]^d)|/2$ in its statement.

Second, we may reduce to the case where the integer $d$ is even, as follows. Suppose that $2 \le d \le D$ are two integers. We may then define a map $i_{d,D}: [n]^d \to [n]^D$ by \[i_{d,D}(x_1, \dots, x_d) = (x_1, \dots, x_1, x_2, \dots, x_d)\] for all $x \in [n]^d$, where the first coordinate is repeated $D-d+1$ times. For any given subset $\ca \subset \cp([n]^d)$ we may define a subset $\ca’ \subset \cp([n]^D)$ to be the collection of subsets of $[n]^D$ of which the restriction to \[i_{d,D}([n]^d) = \{(x_1, \dots, x_1, x_2, \dots, x_d): (x_1, \dots, x_d) \in [n]^d\}\] is equal to $i_{d,D}(A)$ for some $A \in \ca$. The density of $\ca’$ inside $\cp([n]^D)$ is the same as the density of $\ca$ inside $\cp([n]^d)$, and if $A’,B’ \in \ca’$ satisfy $A’ \Delta B’ = S^D$ for some $S \subset [n]$, then the sets \[A = i_{d,D}^{-1}(A’ \cap i_{d,D}([n]^d)) \text{ and }B = i_{d,D}^{-1}(B’ \cap i_{d,D}([n]^d))\] satisfy $A \Delta B = S^d$.

Third, like in Section \ref{Section: The case of one interval} we may define a graph $G$ with vertex set $\cp([n]^d)$, and join two distinct $A,B \in \cp([n]^d)$ by an edge if and only if $A \Delta B = S^d$ for some $S \subset [n]$ that is a union of at most $d/2$ intervals. Again like in Section \ref{Section: The case of one interval} it suffices to show that this new graph $G$ is bipartite, as we can then select $\ca$ to be either of the parts of the bipartition. Finally, writing $\ci_{k}$ for the set of non-empty subsets of $[n]$ that are unions of at most $k$ intervals \footnote{As with some other notations in this paper, the set $\ci_k$ should always be understood as depending on the integer $n$, even if this dependence will not be made explicit each time.}, to show that $G$ is bipartite it suffices to establish the following statement, which will be the main goal of the remainder of this section.

\begin{proposition}\label{linear independence in the general case}

Let $k,d,n$ be positive integers. If $n \ge d = 2k$ then the set of functions $\{\mathbbm{1}_{S^d}: S \in \ci_{k}\}$ is linearly independent over $\F_2$.

\end{proposition}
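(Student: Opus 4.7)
My plan mirrors Section~\ref{Section: The case of one interval}: derive a generalised ``shrinking'' identity that places certain boundary-type indicators in the $\F_2$-span of $\{\mathbbm{1}_{S^d}\}_{S \in \ci_k}$, match dimensions via a counting identity, and conclude via a basis argument. For $S = \bigcup_{r=1}^{j} [a_r, b_r] \in \ci_k$ with $j = j(S) \le k$ maximal intervals, set $\partial S := \{a_1, b_1, \dots, a_j, b_j\}$, and for each $(L, R) \in \{0, 1\}^{j} \times \{0, 1\}^{j}$ let $S(L, R) := \bigcup_{r} [a_r + L_r, b_r - R_r]$ (with empty intervals dropped). Shrinking cannot merge intervals, so $S(L, R) \in \ci_k$. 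With $V(x) := \{x_1, \dots, x_d\}$ for $x \in [n]^d$, I would first verify pointwise the $\F_2$-identity
\[
\sum_{(L, R) \in \{0, 1\}^{j} \times \{0, 1\}^{j}} \mathbbm{1}_{S(L, R)^d}(x) \;=\; \mathbbm{1}_{T(S)}(x), \qquad T(S) := \{x \in [n]^d : \partial S \subseteq V(x) \subseteq S\};
\]
at each $x$, the sum factorises over the intervals of $S$, and the $r$-th factor has odd parity precisely when both $a_r, b_r \in V(x)$, so the product is odd exactly when $\partial S \subseteq V(x) \subseteq S$. Thus each pocket indicator $\mathbbm{1}_{T(S)}$ lies in the $\F_2$-span of $\{\mathbbm{1}_{S'^d}\}_{S' \in \ci_k}$.

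Both $\mathbbm{1}_{S^d}$ and $\mathbbm{1}_{T(S)}$ factor through the support map $x \mapsto V(x)$, so they live in $\F_2^{\cv}$ with $\cv := \{V \subseteq [n] : 1 \le |V| \le d\}$. Pascal's rule yields the key count
\[
|\ci_k| \;=\; \sum_{j=1}^{k} \binom{n+1}{2j} \;=\; \sum_{j=1}^{k} \Bigl(\binom{n}{2j-1} + \binom{n}{2j}\Bigr) \;=\; \sum_{v=1}^{d} \binom{n}{v} \;=\; |\cv|,
\]
the analogue of the match $|\ci_1| = \binom{n+1}{2} = \dim \Sym([n]^2)$ used in Section~\ref{Section: The case of one interval}.

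The remaining step---which I expect to be the main obstacle---is to show that the $|\ci_k|$ pocket indicators $\{\mathbbm{1}_{T(S)}\}_{S \in \ci_k}$ are linearly independent in $\F_2^{\cv}$, hence a basis. Equivalently, the square matrix $N$ with $N[V, S] := [\partial S \subseteq V \subseteq S]$ is invertible over $\F_2$. For $k = 1$ each pocket $T(S)$ consists exactly of the tuples whose support is $\partial S$, so the pocket indicators coincide with the basis functions $\mathbbm{1}_{\{(a, b), (b, a)\}}$ and $\mathbbm{1}_{\{(a, a)\}}$ of Section~\ref{Section: The case of one interval}, and the bijection $S \mapsto \partial S$ makes $N$ the identity. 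For $k \ge 2$, I would construct an explicit bijection $\phi : \ci_k \to \cv$ with $\partial S \subseteq \phi(S) \subseteq S$ for every $S$, together with a total order on $\ci_k$ under which the reordered matrix $N$ becomes upper triangular with $1$'s on the diagonal. A natural candidate for $\phi$ is furnished by the combinatorial bijection $\ci_k^{(j)} \leftrightarrow \binom{[n+1]}{2j}$ underlying the count above (sending $S = \bigcup_r [a_r, b_r]$ to $\{a_1, b_1 + 1, \dots, a_j, b_j + 1\}$), modified to guarantee $\phi(S) \subseteq S$:
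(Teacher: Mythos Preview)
Your Step~2 is exactly the paper's Lemma~\ref{equality between cardinalities}, and your shrinking identity in Step~1 is a close relative of the paper's Lemma~\ref{identity with omega}. The genuine gap is Step~3, and it is not just missing details. Notice that your identity writes $\mathbbm{1}_{T(S)} = \sum_{(L,R)} \mathbbm{1}_{S(L,R)^d}$ with $S(L,R)\subseteq S$ always and $S(L,R)=S$ only for $(L,R)=(0,0)$; hence the change-of-basis matrix from $\{\mathbbm{1}_{S^d}\}_{S\in\ci_k}$ to $\{\mathbbm{1}_{T(S)}\}_{S\in\ci_k}$ is unitriangular (for any total order refining $|S|$) and therefore invertible over $\F_2$. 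So the linear independence of $\{\mathbbm{1}_{T(S)}\}$ is \emph{equivalent} to that of $\{\mathbbm{1}_{S^d}\}$: Step~1 has not reduced the problem, and Step~3 carries its full weight. Your proposed triangularity attack on Step~3 is cut off mid-sentence, and naive orderings fail: for $d=4$, taking $S=[1,2]\cup[9,10]$ and $S'=[1,10]$ one has $\partial S' = \{1,10\}\subseteq \partial S=\{1,2,9,10\}\subseteq S'$, so $N[\partial S,S]=N[\partial S,S']=1$; and for any choice of $\phi(S')=V$ with $\{1,10\}\subseteq V\subseteq[1,10]$ and $|V|\le 4$ one can find $S''\in\ci_2$ with $\partial S''=V\subseteq S''\subsetneq S'$, so neither ordering by $|S|$ increasing nor decreasing gives triangularity.

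The paper avoids this circularity by making the shrinking identity more flexible. Instead of shrinking at \emph{every} endpoint of a union of $j\le k$ intervals, it fixes a union $J$ of \emph{exactly} $k$ intervals with endpoints $a_1<\dots<a_d$ and shrinks only at a prescribed subset $L\subseteq[d]$ of them, obtaining $\mathbbm{1}_{V(E,J)}$ with $E=\{a_l:l\in L\}$ (Lemma~\ref{identity with omega}). The freedom in $L$ is exactly what drives the argument: taking $|L|$ from $d$ down to $1$ feeds a clean downward induction on $|T|$ (Proposition~\ref{spanning in the general case}) showing that every basis vector $\mathbbm{1}_{V(T)}$ lies in the span of $\{\mathbbm{1}_{S^d}:S\in\ci_k\}$. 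Your identity corresponds to the single choice $L=$ ``all endpoints'' (with $j(S)$ allowed to vary), which on its own does not give enough leverage.
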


Let $d$ be a positive integer. We begin by noting that if $S$ is a subset of $[n]$, then whether an element $x \in [n]^d$ belongs to $S^d$ depends purely on the set of coordinates of $x$. Like for $d=2$ this in particular does not depend on the order of the coordinates of $x$, but for $d > 2$ it is worthwhile to note that in addition this does not depend on the multiplicities of the coordinates of $x$ either. As that holds for any $S \subset [n]$, that in particular holds for any $S \in \ci_k$.

For every set $T \subset [n]$ with $1 \le |T| \le d$ we write $V(T)$ for the set of elements $x \in [n]^d$ such that the set of values taken by the coordinates $x_1, \dots, x_d$ is equal to $T$. A reformulation of the above observation is the inclusion \[ \langle \mathbbm{1}_{S^d}: S \in \ci_k \rangle \subset \langle \mathbbm{1}_{V(T)}: 1 \le |T| \le d \rangle \] between linear subspaces (over any field, so in particular over $\F_2$). Since the functions $\mathbbm{1}_{V(T)}$ with $1 \le |T| \le d$ depend on pairwise disjoint sets of coordinates, they are linearly independent, so to establish Proposition \ref{linear independence in the general case}, it suffices to show that both families in terms of which the subspaces are defined have the same size, and that the inclusion is an equality (over $\F_2$, although our proof will not be field-dependent). We begin with the first task.

\begin{lemma}\label{equality between cardinalities}

Let $k,d,n$ be positive integers. If $2k=d$ then \[|\mathbbm{1}_{S^d}: S \in \ci_{k}| = |\{\mathbbm{1}_{V(T)}: 1 \le |T| \le d\}|. \] 

\end{lemma}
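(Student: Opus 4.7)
The plan is to compute both cardinalities in closed form and verify that they coincide via an elementary binomial identity. On the left-hand side, distinct nonempty subsets $S$ of $[n]$ give distinct sets $S^d$ (since $S$ is recoverable as the projection of $S^d$ onto any single coordinate), so $|\{\mathbbm{1}_{S^d}: S \in \ci_k\}| = |\ci_k|$. On the right-hand side, for each $T \subset [n]$ with $1 \le |T| \le d$ the set $V(T)$ is nonempty (one may fill the $d$ coordinate slots by listing each element of $T$ once and padding with arbitrary repetitions, using $|T| \le d$), and $T$ is recovered from $V(T)$ as the common value-set of its elements, so distinct such $T$ yield distinct nonzero indicator functions, and the right-hand cardinality equals $\sum_{j=1}^{d} \binom{n}{j}$.

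Next I would count $|\ci_k|$ by stratifying according to the number $m$ of maximal intervals in the canonical decomposition $[a_1,b_1] \cup \cdots \cup [a_m,b_m]$ of $S$, which is characterised by $1 \le a_i \le b_i \le n$ and the gap condition $b_i + 1 < a_{i+1}$ for $i < m$. The substitution $u_{2i-1} = a_i - 1$, $u_{2i} = b_i$ sets up a bijection between such decompositions and strictly increasing sequences $0 \le u_1 < u_2 < \cdots < u_{2m} \le n$, since the in-interval condition $a_i \le b_i$ becomes $u_{2i-1} < u_{2i}$ and the between-interval gap condition becomes $u_{2i} < u_{2i+1}$. There are exactly $\binom{n+1}{2m}$ such sequences, so
\[|\ci_k| = \sum_{m=1}^{k} \binom{n+1}{2m}.\]

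Finally, Pascal's rule gives $\binom{n+1}{2m} = \binom{n}{2m} + \binom{n}{2m-1}$, so using $d = 2k$,
\[\sum_{m=1}^{k} \binom{n+1}{2m} = \sum_{m=1}^{k} \binom{n}{2m-1} + \sum_{m=1}^{k} \binom{n}{2m} = \sum_{j=1}^{2k} \binom{n}{j} = \sum_{j=1}^{d} \binom{n}{j},\]
which matches the right-hand count exactly. There is no substantive obstacle here: the only point requiring care is the encoding of the mandatory gap between consecutive maximal intervals in the bijection, which is precisely what causes the encoding alphabet to shift from $[n]$ to $\{0,1,\ldots,n\}$ and therefore produces $\binom{n+1}{2m}$ rather than $\binom{n}{2m}$; the identity $d = 2k$ is exactly what is needed for the parity of indices to line up and let Pascal's rule close the gap between the two counts.
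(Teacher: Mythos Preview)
Your proof is correct and follows essentially the same approach as the paper: both count the right-hand side as $\sum_{j=1}^d \binom{n}{j}$, count $|\ci_k|$ as $\sum_{m=1}^k \binom{n+1}{2m}$ via the same bijection $[a_i,b_i] \mapsto \{a_i-1,b_i\}$ into size-$2m$ subsets of $\{0,\ldots,n\}$, and equate the two via Pascal's rule. Your version is slightly more explicit in justifying the injectivity of $S \mapsto \mathbbm{1}_{S^d}$ and $T \mapsto \mathbbm{1}_{V(T)}$, which the paper leaves implicit.
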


\begin{proof}

The number of sets of coordinates that an element of $[n]^d$ may have is equal to \[\binom{n}{1} + \binom{n}{2} + \dots + \binom{n}{d}. \] Meanwhile, the number of subsets of $[n]$ that are unions of exactly $k$ intervals (and no less than $k$ intervals) can be seen to be equal to $\binom{n+1}{2k}$, since we can biject the collection of these sets to the collection of subsets of $\{0,1,\dots,n\}$ with size $2k$, by sending the union of exactly $k$ intervals \[ [a_1, b_1] \cup \dots \cup [a_k,b_k]\] to the set of exactly $2k$ elements \[\{a_1-1,b_1,a_2-1,b_2, \dots, a_k-1,b_k\} \subset \{0,1,\dots,n\}.\] The number of non-empty subsets of $[n]$ that are unions of at most $k$ intervals is hence \[\binom{n+1}{2} + \binom{n+1}{4} + \dots + \binom{n+1}{2k},\] which by Pascal’s formula is equal to \[\binom{n}{1} + \binom{n}{2} + \dots + \binom{n}{2k}.\] Therefore, if $2k=d$, then the number of possible sets of coordinates of an element of $[n]^d$ is equal to the number of non-empty unions of at most $k$ intervals inside $[n]$. \end{proof}

Given Lemma \ref{equality between cardinalities}, proving Proposition \ref{linear independence in the general case} reduces to proving the following statement.

\begin{proposition}\label{spanning in the general case}

Let $k,d,n$ be positive integers. If $n \ge d = 2k$ then for every $T \subset [n]$ with $1 \le |T| \le d$ the function $\mathbbm{1}_{V(T)}$ is spanned by the functions $\mathbbm{1}_{S^d}$ with $S \in \ci_{k}$.

\end{proposition}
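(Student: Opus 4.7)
My plan is to prove the statement by induction on $|T|$ from $d=2k$ downward, using a canonical ``pairing'' set $S(T)\in\ci_k$ attached to each $T$. For $T=\{t_1<t_2<\cdots<t_j\}$ with $j\le d$, I would set
\[
S(T) \;=\; \bigcup_{i=1}^{\lceil j/2\rceil}[t_{2i-1},\, t_{\min(2i,j)}],
\]
which is a union of at most $\lceil j/2\rceil\le k$ intervals (the last ``interval'' being the singleton $\{t_j\}$ when $j$ is odd), so $S(T)\in\ci_k$. The crucial structural feature of this choice is that the elements of $T$ are precisely the endpoints of the constituent intervals, and no element of $T$ lies in the interior of any of them.

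The main algebraic tool is the inclusion-exclusion identity, obtained by writing $\mathbbm{1}[T\subset\{x_1,\ldots,x_d\}]=\prod_{t\in T}\bigl(1-\mathbbm{1}[x_i\ne t\text{ for all }i]\bigr)$, expanding, and multiplying by $\mathbbm{1}_{S(T)^d}$:
\[
\sum_{U\subset T}(-1)^{|U|}\mathbbm{1}_{(S(T)\setminus U)^d}(x) \;=\; \mathbbm{1}\bigl[x\in S(T)^d\ \text{and}\ T\subset\{x_1,\ldots,x_d\}\bigr].
\]
The right-hand side decomposes, by the value of $\{x_1,\ldots,x_d\}$, as $\sum_{T\subset T'\subset S(T),\,|T'|\le d}\mathbbm{1}_{V(T')}$. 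I then need the fact that $S(T)\setminus U\in\ci_k$ for every $U\subset T$: since in each constituent interval of $S(T)$ the only elements of $T$ are its two endpoints (or one, if it is the final singleton), removing $U\cap\{t_{2i-1},t_{\min(2i,j)}\}$ from $[t_{2i-1},t_{\min(2i,j)}]$ leaves a single (possibly empty) sub-interval, so $S(T)\setminus U$ is a union of at most $\lceil j/2\rceil\le k$ intervals. The left-hand side therefore lies in the span of $\{\mathbbm{1}_{S^d}:S\in\ci_k\}$ over $\F_2$.

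Combining the two observations, the induction runs as follows. In the base case $|T|=d$, the constraints $T'\supset T$ and $|T'|\le d$ force $T'=T$, so the identity immediately expresses $\mathbbm{1}_{V(T)}$ in the desired span. In the inductive step with $|T|<d$, the right-hand side equals $\mathbbm{1}_{V(T)}$ plus a sum over $T\subsetneq T'\subset S(T)$ with $|T'|\le d$ of $\mathbbm{1}_{V(T')}$, and each such $T'$ has $|T'|>|T|$ and hence already lies in the span by the inductive hypothesis, so we may solve for $\mathbbm{1}_{V(T)}$. The subtlest point to verify is the interval-count bound for $S(T)\setminus U$ in the case where consecutive pairs $[t_{2i-1},t_{2i}]$ and $[t_{2i+1},t_{2i+2}]$ merge inside $S(T)$ because $t_{2i}+1=t_{2i+1}$: the interior points $t_{2i},t_{2i+1}$ of the merged maximal interval could a priori split it upon removal, but organising the count according to the original unmerged pairing structure (rather than through the maximal intervals of $S(T)$) keeps the total number of components bounded by $\lceil j/2\rceil$, which is what the argument requires.
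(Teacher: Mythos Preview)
Your proof is correct and follows essentially the same approach as the paper's: both perform a decreasing induction on $|T|$ using an inclusion--exclusion identity over removals of the endpoints of a union of at most $k$ intervals (the paper's Lemma~3.4 is precisely your identity, phrased as shifting endpoints by $\pm 1$ rather than deleting elements of $T$). The only cosmetic difference is that the paper first extends $T$ to a set of size $d$ before forming the union of $k$ intervals, whereas you pair up the elements of $T$ directly into $\lceil |T|/2\rceil\le k$ intervals; this makes your version slightly more streamlined but does not alter the argument.
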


To establish Proposition \ref{spanning in the general case} we will resort to an auxiliary lemma, in which we will use the following notations. If $\omega$ is an element of $\{0,1\}^d$ for some integer $d$, then we write $|\omega|$ for the number of coordinates of $\omega$ that are equal to $1$. If $E$, $J$ are subsets of $[n]$ then we write $V(E,J)$ for the set of $x \in [n]^d$ such that the elements of $E$ all appear in the coordinates of $x$, and all coordinates of $x$ belong to $J$. More succinctly, an element $x \in [n]^d$ belongs to $V(E,J)$ if it belongs to $V(T)$ for some set $T$ satisfying $E \subset T \subset J$. If $L$ is a subset of $[d]$ then we write $\{0,1\}^L$ for the set of $\omega \in \{0,1\}^d$ such that $\omega_i = 0$ for every $i \in [d] \setminus L$.

\begin{lemma}\label{identity with omega}

Let $n \ge d$ be positive integers with $d$ even, let $a_1 < \dots < a_d$ be elements of [n], let $J$ be the union of intervals \[[a_1, a_2] \cup \dots \cup [a_{d-1}, a_d],\] and let $E = \{a_l: l \in L\}$ for some $L \subset [d]$. Then we have the decomposition \begin{equation} \sum_{\omega \in \{0,1\}^{L}} (-1)^{|\omega|} \mathbbm{1}_{([a_1 + \omega_1, a_2 - \omega_2] \cup \dots \cup [a_{d-1} + \omega_{d-1}, a_d - \omega_d])^d} = \mathbbm{1}_{V(E,J)}. \label{generating indicators of value sets} \end{equation}

\end{lemma}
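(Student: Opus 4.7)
The plan is to evaluate the left-hand side pointwise at a fixed $x \in [n]^d$ and to recognize a telescoping/inclusion-exclusion sum whose value is $\mathbbm{1}_{V(E,J)}(x)$. The first step is to describe the set
\[S(\omega) := [a_1 + \omega_1, a_2 - \omega_2] \cup \dots \cup [a_{d-1} + \omega_{d-1}, a_d - \omega_d]\]
explicitly. Because the $a_i$ are strictly increasing and the intervals $[a_{2j-1}, a_{2j}]$ making up $J$ are pairwise disjoint with $a_i$ appearing only in the $j$th block (where $j = \lceil i/2 \rceil$), setting $\omega_i = 1$ has the effect of removing precisely the endpoint $a_i$ from that block. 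Thus I would observe
\[S(\omega) = J \setminus \{a_i : i \in L, \ \omega_i = 1\},\]
using that $\omega_i = 0$ for $i \notin L$ by the definition of $\{0,1\}^L$.

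Next, given $x \in [n]^d$, let $T(x) \subset [n]$ denote its set of coordinate values and let $P(x) := \{i \in L : a_i \in T(x)\}$. Then $x$ belongs to $S(\omega)^d$ exactly when every coordinate of $x$ lies in $J$ and no coordinate of $x$ equals any $a_i$ with $\omega_i = 1$, i.e.\ when $x \in J^d$ and $\{i \in L : \omega_i = 1\} \subseteq L \setminus P(x)$. This lets me factor
\[\sum_{\omega \in \{0,1\}^L} (-1)^{|\omega|} \mathbbm{1}_{S(\omega)^d}(x) \;=\; \mathbbm{1}_{J^d}(x) \sum_{\sigma \subseteq L \setminus P(x)} (-1)^{|\sigma|}.\]

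The inner sum is $(1-1)^{|L \setminus P(x)|}$, which equals $1$ if $L \subseteq P(x)$ and $0$ otherwise. Since $L \subseteq P(x)$ is the same as $E \subseteq T(x)$, the whole expression equals $\mathbbm{1}_{J^d}(x) \mathbbm{1}_{E \subseteq T(x)} = \mathbbm{1}_{V(E,J)}(x)$, which is exactly the desired identity.

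The main potential pitfall is the bookkeeping in the first step: one must check carefully that shifting the left endpoint of $[a_{2j-1}, a_{2j}]$ by $\omega_{2j-1}$ and its right endpoint by $\omega_{2j}$ removes exactly the appropriate $a_i$'s (and nothing else) from $J$, and in particular that when the interval $[a_{2j-1}+1, a_{2j}-1]$ happens to become empty (for instance if $a_{2j} = a_{2j-1}+1$) the description $J \setminus \{a_i : \omega_i = 1\}$ still holds. Once this is set up correctly, the remaining computation is the standard $(1-1)^m$ inclusion-exclusion, which is field-independent and therefore works over $\mathbb{F}_2$ as well (as will be needed when applying the lemma to obtain Proposition \ref{spanning in the general case}).
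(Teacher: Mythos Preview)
Your proof is correct. The key identity $S(\omega) = J \setminus \{a_i : i \in L,\ \omega_i = 1\}$ holds even in the edge cases you flag (adjacent blocks with $a_{2t+1}=a_{2t}+1$, or a block collapsing to the empty interval), because each $a_i$ lies in exactly one of the formal intervals $[a_{2j-1},a_{2j}]$ and shifting an endpoint by $1$ removes precisely that element. After that, your pointwise computation is the standard binomial cancellation and is correct as written.

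Your route differs from the paper's. The paper proceeds iteratively: it reduces (without loss of generality) to $L=[l]$, then builds the identity one index at a time by repeatedly taking a first difference, e.g.\ $\mathbbm{1}_{J^d}-\mathbbm{1}_{(J\setminus\{a_1\})^d}=\mathbbm{1}_{V(\{a_1\},J)}$, then subtracting the analogue with $a_2$ replaced by $a_2-1$, and so on until all $l$ endpoints have been processed. Your argument instead performs the full $|L|$-fold inclusion--exclusion in one shot at a fixed point $x$. The advantage of your approach is that it is shorter, avoids the WLOG on $L$, and makes the field-independence transparent; the paper's iterative presentation has the virtue of making the telescoping structure (and hence the connection with the $d=2$ computation from Section~\ref{Section: The case of one interval}) more visibly explicit. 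Both arguments are really the same inclusion--exclusion, just unrolled differently.
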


\begin{proof}

We will assume without loss of generality that $L = [l]$ for some $0 \le l \le d$: as will be clear from the remainder of the proof all that we will be using about $a_1, \dots, a_l$ is that they are endpoints of the union $J$ of intervals, at least in the way that we defined it as a union of intervals (they might not be genuine endpoints, if $a_{2t+1}=a_{2t}+1$ for some $1 \le t \le d/2 - 1$, but this will not be an impediment for us). We begin by writing \[ \mathbbm{1}_{([a_1, a_2] \cup \dots \cup [a_{d-1}, a_d])^d} - \mathbbm{1}_{([a_1+1, a_2] \cup \dots \cup [a_{d-1}, a_d])^d} = \mathbbm{1}_{V(\{a_1\}, J)}.\] Replacing $a_2$ by $a_2-1$ we likewise have that \[\mathbbm{1}_{([a_1, a_2-1] \cup \dots \cup [a_{d-1}, a_d])^d} - \mathbbm{1}_{([a_1+1, a_2-1] \cup \dots \cup [a_{d-1}, a_d])^d} = \mathbbm{1}_{V(\{a_1\}, J \setminus \{a_2\})}.\] Subtracting the second identity from the first provides \[\sum_{\omega \in \{0,1\}^2} (-1)^{|\omega|}\mathbbm{1}_{([a_1 + \omega_1, a_2 - \omega_2] \cup [a_3, a_4] \cup \dots \cup [a_{d-1},a_d])^d} = \mathbbm{1}_{V(\{a_1, a_2\}, J)}.\] Replacing $a_3$ by $a_3+1$ in this equality we obtain the identity \[\sum_{\omega \in \{0,1\}^2} (-1)^{|\omega|}\mathbbm{1}_{([a_1 + \omega_1, a_2 - \omega_2] \cup [a_3+1, a_4] \cup \dots \cup [a_{d-1},a_d])^d} = \mathbbm{1}_{V(\{a_1, a_2\}, J \setminus \{a_3\})},\] and subtracting it from the identity before it then yields \[\sum_{\omega \in \{0,1\}^3} (-1)^{|\omega|}\mathbbm{1}_{([a_1 + \omega_1, a_2 - \omega_2] \cup [a_3 + \omega_3, a_4] \cup [a_5, a_6] \cup \dots \cup [a_{d-1},a_d])^d} = \mathbbm{1}_{V(\{a_1, a_2, a_3\}, J)}.\] Continuing like this we eventually conclude that $\mathbbm{1}_{V(\{a_1, a_2, a_3, \dots, a_l\}, J)}$ is equal to \[\sum_{\omega \in \{0,1\}^{l}} (-1)^{|\omega|} \mathbbm{1}_{([a_1 + \omega_1, a_2 - \omega_2] \cup \dots \cup [a_{l-1} + \omega_{l-1}, a_l - \omega_l] \cup [a_{l+1}, a_{l+2}] \cup \dots \cup [a_{d-1}, a_d])^d} \] if $l$ is even and to \[\sum_{\omega \in \{0,1\}^{l}} (-1)^{|\omega|} \mathbbm{1}_{([a_1 + \omega_1, a_2 - \omega_2] \cup \dots \cup [a_{l-2} + \omega_{l-2}, a_{l-1} - \omega_{l-1}] \cup [a_l + \omega_l, a_{l+1}] \cup [a_{l+2}, a_{l+3}] \cup \dots \cup [a_{d-1}, a_d])^d}\] if $l$ is odd; in both cases this is the desired identity. \end{proof}

Repeatedly applying this identity to sets $L$ of decreasing size now allows us to finish the deduction of Proposition \ref{spanning in the general case}. As stated earlier, this finishes the proof of Proposition \ref{linear independence in the general case} and therefore of Theorem \ref{No powers of unions of intervals as symmetric differences}.

\begin{proof}[Proof of Proposition \ref{spanning in the general case}]

We establish by decreasing induction on $|T|$ that if $1 \le |T| \le d$ then $\{\mathbbm{1}_{S^d}: S \in \ci_{k}\}$ spans $\mathbbm{1}_{V(T)}$. The base case is the case $|T|=d$. There, writing $T = \{a_1, \dots, a_d\}$ with $a_1 < \dots < a_d$ and applying Lemma \ref{identity with omega} in the case $L = [d]$ we obtain \[\sum_{\omega \in \{0,1\}^{d}} (-1)^{|\omega|} \mathbbm{1}_{([a_1 + \omega_1, a_2 - \omega_2] \cup \dots \cup [a_{d-1} + \omega_{d-1}, a_d - \omega_d])^d} = \mathbbm{1}_{V(\{a_1, a_2, a_3, \dots, a_d\}, J)},\] and as an element of $[n]^d$ may have only at most $d$ pairwise distinct coordinates, the right-hand side is exactly $\mathbbm{1}_{V(T)}$ as desired.

Suppose next that $|T|=d-1$, which we write as $T = \{a_1, \dots, a_{d-1} \}$ with $a_1 < \dots < a_{d-1}$. Since $n \ge d$, we may complete $T$ into a set $T’ = \{a_1’, \dots, a_d’\}$ of size $d$ with $a_1’< \dots < a_d’$ by adding an arbitrary extra value between some two of the values $a_1,\dots, a_{d-1}$. Let $i \in [d]$ be the index of this extra value. Applying Lemma \ref{identity with omega} with $L = [d] \setminus \{i\}$ provides the identity \[\sum_{\omega \in \{0,1\}^{d}: \omega_i = 0} (-1)^{|\omega|} \mathbbm{1}_{([a_1’ + \omega_1, a_2’ - \omega_2] \cup \dots \cup [a_{d-1}’ + \omega_{d-1}, a_d’ - \omega_d])^d} = \mathbbm{1}_{V(\{a_1', \dots, a_{i-1}’, a_{i+1}’ \dots, a_d’\}, J’)},\] where $J’ = [a_1’, a_2’] \cup \dots \cup [a_{d-1}’, a_d’]$. The right-hand side may in turn be decomposed: we have \[\mathbbm{1}_{V(\{a_1’, \dots, a_{i-1}’, a_{i+1}’ \dots, a_d’\}, J’)} = \mathbbm{1}_{V(T)} + \sum_{u \in J' \setminus \{a_1’,\dots,a_d’\}} \mathbbm{1}_{V(\{a_1’, \dots, a_{i-1}’, a_{i+1}’ \dots, a_d’,u\})}.\] The last two decompositions together provide a decomposition of $\mathbbm{1}_{V(T)}$ in terms of sets of the type $\mathbbm{1}_{V(T'')}$ with $|T''| = d$, which by the base case of the induction ensures that $\mathbbm{1}_{V(T)}$ is spanned by $\{\mathbbm{1}_{S^d}: S \in \ci_{k}\}$.

More generally, the inductive step is as follows. Suppose that for some $1 \le l < d$ we have shown that if $T$ is a subset of $[n]$ with size $l+1 \le |T| \le d$ then $\mathbbm{1}_{V(T)}$ is spanned by $\{\mathbbm{1}_{S^d}: S \in \ci_{k}\}$, and that we want to deduce from this that this still holds for $|T|=l$. Let $T$ be a subset of $[n]$ with size $l$. We may write $T = \{a_1, \dots, a_l\}$ with $a_1 < \dots < a_l$. The subset $T$ may be extended into a set $T’ = \{a_1’, \dots, a_d’\}$ of elements of $[n]$, with $a_1' < \dots < a_d'$. Writing $L$ for the set of indices $i \in [d]$ such that $a_i$ belongs to $T$, Lemma \ref{identity with omega} provides \begin{equation} \sum_{\omega \in \{0,1\}^{L}} (-1)^{|\omega|} \mathbbm{1}_{([a_1 + \omega_1, a_2 - \omega_2] \cup \dots \cup [a_{d-1} + \omega_{d-1}, a_d - \omega_d])^d} = \mathbbm{1}_{V(T, T’)}. \label{identity with V(T,T')} \end{equation} Recall that the condition for an element $x \in [n]^d$ to belong to $V(T,T’)$ is that its coordinates take every value in $T$, and all its coordinates take values in $T’$; this is equivalent to saying that the set of values taken by the coordinates of $x$ contains $T$ and is contained in $T’$; more formally we have the identity \[\mathbbm{1}_{V(T, T’)}  = \sum_{T \subset T'' \subset T’} \mathbbm{1}_{V(T'')}\] which we may rewrite as \begin{equation} \mathbbm{1}_{V(T)} = \mathbbm{1}_{V(T, T’)} - \sum_{T \subset T'' \subset T’: T'' \neq T} \mathbbm{1}_{V(T'')}. \label{decomposition of V(T) in the inductive step} \end{equation}

The identity \eqref{identity with V(T,T')} shows that $\{\mathbbm{1}_{S^d}: S \in \ci_{k}\}$ spans $\mathbbm{1}_{V(T, T’)}$, and since the sets $T''$ over which the sum is taken on the right-hand side of \eqref{decomposition of V(T) in the inductive step} all have size between $l+1$ and $d$, the inductive hypothesis shows that they are each spanned by $\{\mathbbm{1}_{S^d}: S \in \ci_{k}\}$ as well. This set therefore spans $\mathbbm{1}_{V(T)}$, which completes the inductive step. \end{proof}

Proposition \ref{linear independence in the general case} no longer holds if we instead consider unions of at most $k > \lfloor d/2 \rfloor$ intervals, as then (provided that $n \ge d+1$) the equality in Lemma \ref{equality between cardinalities} becomes the strict inequality \[|\mathbbm{1}_{S^d}: S \in \ci_{k}| > |\{\mathbbm{1}_{V(T)}: 1 \le |T| \le d\}|. \] Furthermore, for unions of at most $k>\lfloor d/2 \rfloor$ intervals there is a more basic obstacle to the present proof method, as we can directly describe a cycle with odd length $2^{d+1} - 1$ in the resulting graph $G$ which is hence no longer bipartite. Indeed we have the identity \[ \sum_{\emptyset \neq S \subset [d+1]} (-1)^{|S|} \mathbbm{1}_{S^d} = 0\] as can be checked pointwise: for any given $x \in [n]^d$, letting $T$ be the set of values taken by the coordinates $x_1, \dots, x_d$, the sum evaluates at $x$ to \[\sum_{T \subset S \subset [d+1]} (-1)^{|S|} = (-1)^{|T|} \sum_{U \subset [d+1] \setminus T} (-1)^{|U|}\] which cancels since $[d+1] \setminus T$ is non-empty. Since every subset of $[d+1]$ is a union of at most $\lceil (d+1)/2 \rceil = \lfloor d/2 \rfloor+1$ intervals, this obstacle comes into play as soon as $k > \lfloor d/2 \rfloor$.

\section{Final comments}

Suppose that for some positive integers $d$,$a$ and some sequence of functions $F_n: \cp([n])^a \to \cp([n]^d)$ we consider the following statement: that if $\d>0$ and $\ca$ is a collection of at least a proportion $\d$ of the subsets of $[n]^d$, then for $n$ large enough (depending on $\d$) we can find a pair $A,B$ of distinct elements of $\ca$ satisfying \[A \Delta B = F_n(S_1, \dots, S_a)\] for some subsets $S_1, \dots, S_a \subset [n]$. (In this paper we have focused on the case $a=1$ and $F_n:S \mapsto S^d$.) There does not yet appear to be a known basic principle which allows one to characterise when, assuming that this statement is true, we may furthermore require the sets $S_1, \dots, S_a$ to be intervals or unions of a bounded number of intervals, with this bound depending on $\d$ only.

On the other hand, provided that the sequence of functions $(F_n)$ is nested in the sense that $F_{n_1}(S_1, \dots, S_a) = F_{n_2}(S_1, \dots, S_a)$ for all integers $1 \le n_1 \le n_2$ and all subsets $S_1, \dots, S_a \subset [n_1]$, there is a simple way to achieve such a bound that is allowed to depend on $\d$ but is besides that uniform in $n$ and in $\ca$. We end by describing this simple argument; we have chosen to write it specifically for the situation that we have considered throughout this paper but it may be adapted line by line to the more general setting that we have just mentioned.

\begin{proposition} \label{equivalence between three statements}

For every integer $d \ge 1$ and every $\d>0$ the following three statements are equivalent.

\begin{enumerate}[(i)]

\item There exists an integer $N_1(\d)$ satisfying if $n \ge N_1(\d)$ then every $\ca \subset \cp([n]^d)$ satisfying $|\ca| \ge \d |\cp([n]^d)|$ contains a pair $(A,B)$ such that $A \Delta B = S^d$ for some non-empty $S \subset [n]$.

\item There exist integers $N_2(\d)$, $T_2(\d)$ satisfying if $n \ge N_2(\d)$ then every $\ca \subset \cp([n]^d)$ satisfying $|\ca| \ge \d |\cp([n]^d)|$ contains a pair $(A,B)$ such that $A \Delta B = S^d$ for some non-empty $S \subset [n]$ which is the union of at most $T_2(\d)$ intervals.

\item There exist integers $N_3(\d)$, $T_3(\d)$ satisfying if $n \ge N_3(\d)$ then every $\ca \subset \cp([n]^d)$ satisfying $|\ca| \ge \d |\cp([n]^d)|$ contains a pair $(A,B)$ such that $A \Delta B = S^d$ for some non-empty $S \subset [n]$ with size at most $T_3(\d)$.

\end{enumerate} Quantitatively, (ii) implies (i) with $N_1(\d) \le N_2(\d)$, (iii) implies (ii) with $N_2(\d) \le N_3(\d)$ and $T_2(\d) \le T_3(\d)$, and (i) implies (iii) with $N_3(\d) \le N_1(\d)$ and $T_3(\d) \le N_1(\d)$.

\end{proposition}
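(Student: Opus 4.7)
The plan is to arrange the three statements in increasing quantitative strength and to handle the implications accordingly. Statement (iii) is the strongest, (ii) intermediate, and (i) the weakest, so two of the three implications will be essentially tautological and I will focus the real work on the third.

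I would begin with (iii) $\Rightarrow$ (ii): a non-empty subset $S \subset [n]$ of size at most $T_3(\d)$ is a union of at most $T_3(\d)$ singleton intervals, so one may take $N_2(\d) \le N_3(\d)$ and $T_2(\d) \le T_3(\d)$. Similarly (ii) $\Rightarrow$ (i) follows by simply forgetting the constraint on the number of intervals, which yields $N_1(\d) \le N_2(\d)$.

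The substantive direction is (i) $\Rightarrow$ (iii). Given $\d>0$, I would set $m = N_1(\d)$, assume $n \ge m$, and let $I$ denote the subset $[m] \subset [n]$. Every $A \in \cp([n]^d)$ decomposes uniquely as $A = A^{\mathrm{in}} \sqcup A^{\mathrm{out}}$ with $A^{\mathrm{in}} \subset I^d$ and $A^{\mathrm{out}} \subset [n]^d \setminus I^d$. Given $\ca \subset \cp([n]^d)$ with density at least $\d$, I would partition $\ca$ according to the outer part $A^{\mathrm{out}}$; since there are $2^{n^d - m^d}$ possible outer parts, pigeonhole furnishes a class $\cb \subset \ca$ with at least $\d \cdot 2^{m^d}$ elements sharing a common outer part. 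Their restrictions to $I^d$ are then pairwise distinct and form a subfamily of $\cp(I^d)$ of density at least $\d$. Applying (i) to this subfamily inside $\cp(I^d)$, which I identify with $\cp([m]^d)$ through the order-preserving bijection $I \to [m]$, produces two such restrictions whose symmetric difference equals $S^d$ for some non-empty $S \subset I$. The corresponding two elements of $\cb$ share the outer part, so their symmetric difference inside $[n]^d$ is exactly $S^d$, with $|S| \le m$, giving (iii) with $N_3(\d) \le N_1(\d)$ and $T_3(\d) \le N_1(\d)$.

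The one step that I would expect to require attention---though not to be a genuine obstacle---is the legitimacy of applying (i) inside $\cp([m]^d)$: it hinges on the fact that for $S \subset I$ the set $S^d$ has the same meaning whether it is computed in $I^d$ or in $[n]^d$, which is exactly the nestedness property of $F_n : S \mapsto S^d$ mentioned in the preamble to the proposition. Once this is noted, the three quantitative comparisons follow immediately from the chain of implications above.
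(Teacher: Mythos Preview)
Your proposal is correct and follows essentially the same approach as the paper: the two trivial implications are identical, and for (i) $\Rightarrow$ (iii) both you and the paper fix $m=N_1(\d)$, use averaging/pigeonhole over the ``outer part'' in $[n]^d\setminus[m]^d$ to find a common $U$ on which a $\d$-dense subfamily of $\cp([m]^d)$ sits, apply (i) there to obtain $S\subset[m]$, and lift back to $\ca$. Your explicit remark on nestedness is the same point the paper flags just before stating the proposition.
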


\begin{proof}

The first two implications are immediate; let us show the third. Suppose that (i) holds for some $N_1(\d)$. Let $\ca \subset \cp([n]^d)$ such that $|\ca| \ge \d \cp([n]^d)$ for some $\d>0$ and some integer $n \ge N_1(\d)$. Then by averaging, there exists a subset $U \subset [n]^d \setminus [N_1(\d)]^d$ such that the density $|\ca’| / |[N_1(\d)]^d|$ of \[\ca’= \{A' \subset [N_1(\d)]^d: A' \cup U \in \ca\}\] inside $[N_1(\d)]^d$ is at least the density $|\ca| / |[n]^d|$ of $\ca$ inside $[n]^d$. By (i) we can find a pair of subsets $A’,B’ \subset [N_1(\d)]^d$ such that $A' \Delta B' = S^d$ for some non-empty subset $S \subset [N_1(\d)]$, and the sets $A,B \in \cp([n]^d)$ defined by $A = A’ \cup U$ and $B = B’ \cup U$ then both belong to $\ca$ and satisfy $A \Delta B = S^d$. Furthermore, the set $S$ has size at most $N_1(\d)$. \end{proof}

\end{document}